\newtheorem{theorem}{Theorem}
\newtheorem{lemma}[theorem]{Lemma}
\newtheorem{proposition}[theorem]{Proposition}
\font\tenBb=msbm10 \font\sevenBb=msbm7 \font\fiveBb=msbm5
\def\Bb{\fam\Bbfam\tenBb}
\def\N{{\Bb N}}
\def\Z{{\Bb Z}}
\begin{document}
\title{A NEW CHARACTERIZATION OF PRIME FERMAT'S
	NUMBERS}

\author{Ahmed Bouzalmat}
\address{Universit\'e Ibn Zohr, Facult\'e des Sciences, D\'epartement de Math\'ematiques, Agadir, Maroc}
\email{bouzalmat1962@gmail.com }

\author{Ahmed Sani }
\address{Universit\'e Ibn Zohr, Facult\'e des Sciences, D\'epartement de Math\'ematiques, Agadir, Maroc}
\email{ahmedsani82@gmail.com }

\thanks{}

\footnote{\today}

\subjclass[2010]{Primary   11R04; Secondary 11Axx   }

\keywords{Fermat theorem}

\begin{abstract}
We give a new  sufficient condition which allows to test primality of
Fermat's numbers. This characterization uses uniquely values at most equal to
tested Fermat number. The robustness of this result is due  to a strict use of
elementary arithmetic technical tools and  it will be susceptible to open gates for revolutionary statement that all Fermat's numbers are all decomposable .

\end{abstract}

\maketitle

\section*{Introduction}
\setcounter{theorem}{0}
 \setcounter{equation}{0}

The first Fermat's numbers $F_0 = 3$, $F_1 = 5$, $F_2 = 17$, $F_3 = 257$, $F_4 = 65537$ are
all prime. This observation leads Jean Pierre de Fermat to formulate his famous
conjecture about a category of prime numbers which bear his name. Fermat's
numbers are of great interest in arithmetic and more generally in number theory.
Fermat conjectured that his numbers of the form $F_n = 2^{2^n} + 1; n\in \N $ are all prime.
Unfortunately, it turns out that $F_5$ is decomposable.
Therefore, researches were axed on characterizing those which are prime. For the best of our knowledge, A summarizing reference on this topic is \cite{KLS01} and comments therein. For fast familiarization and easy use of Fermat's numbers it is recommendable to take an attempt for resolving the elementary problem in \cite[pages:73-75]{Siklo16} suggested, according to the author by Goldbach, for an alternative proof of the fact that prime number set is infinite.\\

It is not necessary, but instructive, to recall dissociation between the two well-known Fermat's conjectures: The first, treated in this paper, concerns Fermat's numbers as defined above while the second states whether the Diophant equation: $x^n+y^n=z^n$, for a given integer $n$ and the unknown triplet $(x,y,z)$ has non trivial solution. For a recent and likely serious treatment of this latter question, one may see \cite{Wiles95}.\\
In the current paper, we give a new characterization of decomposability of 
numbers $(F_n)_n$. More precisely, we prove that $F_n$ is prime if and only if $F_n$ divides one of a suitable terms of finite sequence $(A_q)_{1\le q\le 2^n}$  which we make precise at the following
section. This latter one is devoted to introduce necessary mathematical ingredients to give a new characterization of Fermat's number primality. In fact, the main result gives an efficient algorithm to the fail of this primality. We test our result in two ways: on one hand, we examine the primality of $F_n$ for enough large values of $n$ (larger that all tested until nowadays) and on an other one, we compare with some existent algorithms in terms of consistency and speed.

\section{preliminaries}

The start observation at the introduction leads Fermat to a natural  conjecture
that all numbers $F_n = 2^{2^n} + 1, n\in \N$ are all prime. The recent development of digital
calculus proved that this conjecture is far being true. Indeed, it discovered that  $F_5 = 2^{2^5}  + 1 = 641\times6700417$ is decomposable. Another aspect of their importance comes from the classical Gauss-Wantzel theorem on constructibility with compass and straightedge (see among others \cite{Det91}).\\

We consider the sequence $(A_n)_{n\ge 1}$ of integers which will play an important role in the sequel:
$$
\left\{\begin{array}{c}
A_1=6\\
 A_{n+1}=A_n^2-2; \  n\ge 1.
 \end{array}\right.
$$
Some properties of the sequence $(A)_{n\ge 1}$ are immediate. For example, one checks that it is strictly increasing  and all  each $A_n$ is even number. We state that for all $n\ge 1$ e can write $A_n$ as a combination of two particular invertible elements (units) of the classical Gauss ring $\Z[\sqrt{2}]$. The existence of such family is a well-known  result of classical algebra dealing with modulus of finite type. Indeed, if we consider $u=3+2\sqrt{2}$ and its \emph{inverse-conjugate} $v=\overline{u}=3-2\sqrt{2}$, then
one verifies easily that: $u+v=6$ and $uv=1$. Using this trivial fact and
a simple induction, one may establish the following proposition which clarifies our statement on generation of all terms $A_n$

\begin{proposition}\label{propo1}
	The sequence $(A)_{n\ge 1}$ satisfies:
	$$
	\forall n\ge 0: \quad A_{n+1}=u^{2^n}+v^{2^n}.
	$$
\end{proposition}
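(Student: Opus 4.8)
The plan is to prove the identity by induction on $n$, exploiting the two relations $u+v=6$ and $uv=1$ that are already recorded in the excerpt. The recurrence $A_{n+1}=A_n^2-2$ has exactly the shape of a ``doubling'' formula for sums of the form $u^k+v^k$, so the closed expression $u^{2^n}+v^{2^n}$ is the natural candidate, and the whole content of the statement is that squaring doubles the exponent while the correction term $-2$ precisely cancels the cross term produced by the square.

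First I would check the base case $n=0$. Here $u^{2^0}+v^{2^0}=u+v=6=A_1$, which matches the initial condition, so the formula holds for $n=0$.

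For the inductive step, suppose that $A_{n+1}=u^{2^n}+v^{2^n}$ for some $n\ge 0$; I want to deduce the corresponding identity for $A_{n+2}$. Squaring the inductive hypothesis gives
$$
A_{n+1}^2=\left(u^{2^n}+v^{2^n}\right)^2=u^{2^{n+1}}+v^{2^{n+1}}+2\,(uv)^{2^n}.
$$
The only arithmetic input needed is $uv=1$, which forces $(uv)^{2^n}=1$, so the cross term equals $2$. Substituting into the recurrence $A_{n+2}=A_{n+1}^2-2$ then yields
$$
A_{n+2}=u^{2^{n+1}}+v^{2^{n+1}},
$$
which is exactly the claimed formula with $n$ replaced by $n+1$. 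This closes the induction.

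I do not expect any genuine obstacle: the argument is a direct induction and its single substantive ingredient is the relation $uv=1$ in $\Z[\sqrt{2}]$, which guarantees that $u^{2^n}$ and $v^{2^n}$ remain mutually inverse at every stage. The only point requiring mild care is bookkeeping of the index shift, since the proposition is stated in terms of $A_{n+1}$ rather than $A_n$; writing the base case as $n=0$ and the step as ``$n\rightsquigarrow n+1$'' keeps the exponents $2^n$ aligned with the correct term of the sequence and avoids an off-by-one error.
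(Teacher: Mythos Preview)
Your proof is correct and is precisely the ``simple induction'' the paper alludes to: the base case uses $u+v=6=A_1$, and the inductive step uses $uv=1$ to kill the cross term in $(u^{2^n}+v^{2^n})^2$. There is nothing to add.
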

Let $ p$ be a prime number. The key of the main result is an extrapolation of
congruence relation to the ring $\Z[\sqrt{2}]$ where we define, as in $\Z$, the binary relation
$$
\forall (x,y)\in \Z[\sqrt{2}]^2: \quad x\equiv y[p] \iff \exists k\in Z\ x-y=kp.
$$

To write more clearly, let $x = a + b\sqrt{2}$ and $y = a' + b'\sqrt{2}$. Then
$$
x\equiv y[p]  \iff x\equiv y[p]  \ a\equiv a'[p] \mbox{and}\ b\equiv b'[p], 
$$
where the last congruence relations are seen in the classical sense (i.e in $\Z$:)

\begin{lemma}\label{lemma}
	Let $p = F_n$ a prime number which is the Fermat form. Then
	$$u^p\equiv  u [p].$$
\end{lemma}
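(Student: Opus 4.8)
The plan is to reduce the statement to the ``freshman's dream'' in characteristic $p$, Fermat's little theorem, and the quadratic character of $2$ modulo $p$. First I would expand $u^p=(3+2\sqrt2)^p$ by the binomial theorem. Because $p$ is prime, $p\mid\binom pk$ for every $0<k<p$, and the congruence $\equiv[p]$ on $\Z[\sqrt2]$ was defined coefficientwise precisely so that each such middle term vanishes modulo $p$; hence
$$u^p\equiv 3^p+(2\sqrt2)^p\ [p].$$
This is the step I would treat most carefully, since the congruence lives in $\Z[\sqrt2]$ rather than in $\Z$: I would check the vanishing of the middle terms on the rational and irrational parts separately, invoking the definition of $\equiv[p]$ stated just before the lemma.

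Next I would evaluate the two surviving summands by ordinary arithmetic. Fermat's little theorem gives $3^p\equiv3\ [p]$. For the second summand, since $p$ is odd I write $(2\sqrt2)^p=2^p(\sqrt2)^p=2^p\,2^{(p-1)/2}\sqrt2$, and $2^p\equiv2\ [p]$ again by Fermat. It then remains to compute $2^{(p-1)/2}\bmod p$, which by Euler's criterion equals the Legendre symbol $\left(\tfrac2p\right)$.

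The crux is therefore the quadratic character of $2$ modulo the Fermat prime $p=F_n$. Here I would invoke the supplement $\left(\tfrac2p\right)=1\iff p\equiv\pm1\ [8]$ together with the shape $F_n=2^{2^n}+1$: for $n\ge2$ we have $2^n\ge4$, so $2^{2^n}\equiv0\ [8]$ and thus $p\equiv1\ [8]$, whence $2^{(p-1)/2}\equiv1\ [p]$. Substituting back yields
$$u^p\equiv 3+2\cdot1\cdot\sqrt2=3+2\sqrt2=u\ [p],$$
as claimed. I expect the genuine obstacle to be exactly this point. The identity $u^p\equiv u$ is equivalent, since $u$ is a unit with $uv=1$, to $\sqrt2$ being fixed by the Frobenius map $x\mapsto x^p$, i.e. to $2$ being a square modulo $p$; this is what forces $p\equiv1\ [8]$ and hence $n\ge2$, whereas for $F_0=3$ and $F_1=5$ one instead finds $u^p\equiv v$. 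I would therefore flag this restriction explicitly, or equivalently recast the argument ring-theoretically: when $2$ is a residue the quotient $\Z[\sqrt2]/(p)$ splits as a product of two copies of $\Z/p\Z$, and Fermat's little theorem applied in each factor delivers $u^p\equiv u$ at once.
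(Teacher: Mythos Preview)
Your argument is correct and shares the paper's overall architecture: both expand $(3+2\sqrt2)^p$ by the binomial theorem, kill the middle terms using $p\mid\binom{p}{k}$, apply Fermat's little theorem to $3^p$ and $2^p$, and reduce everything to the single claim $2^{(p-1)/2}\equiv 1\ [p]$.

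The one genuine difference is how that claim is established. The paper argues directly from the shape of $p$: since $p=2^{2^n}+1$, one has $2^{2^n}\equiv -1\ [p]$, hence $2^{2^{n+1}}\equiv 1\ [p]$; as $(p-1)/2=2^{\,2^n-1}$ and $n+1\le 2^n-1$ for $n\ge 2$, the order of $2$ divides $(p-1)/2$, giving the result. You instead invoke Euler's criterion and the supplementary law $\bigl(\tfrac{2}{p}\bigr)=1\iff p\equiv\pm1\ [8]$, together with $F_n\equiv 1\ [8]$ for $n\ge 2$. The paper's route is more elementary (no quadratic reciprocity needed), while yours is more conceptual and makes transparent that the statement is exactly the assertion that $2$ is a quadratic residue modulo $p$; your ring-theoretic remark that $\Z[\sqrt2]/(p)$ splits and Frobenius acts trivially is a nice repackaging of the same fact.

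Your explicit flag that the lemma requires $n\ge 2$ (equivalently $p\equiv 1\ [8]$), with $u^p\equiv v$ for $p\in\{3,5\}$, is a real improvement: the paper's own chain of implications needs the inequality $n+1\le 2^n-1$, which also fails for $n\in\{0,1\}$, but the paper does not say so.
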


\begin{proof}
	Thanks to Newton binomial identity and taking in mind that $p$ divides all binomial
	coefficients: $\left(\begin{array}{c}p\\ k\end{array}\right);k=1,2,\dots ,p-1 $  it suffices to prove that if $m =\frac{p-1}{2}=2^{2^n-1}$
	then $2^{m} \equiv  1[p]$. The following implications show this fact:
	$$\begin{tabular}{ccc}
	$2^{2^n}+1\equiv 0[p]$ & $\implies$ & $2^{2^n}\equiv -1[p]$\\
	$ $ & $\implies$ & $(2^{2^n})^2\equiv 1[p]$\\
	
	$ $ & $\implies$ & $ 2^{2^{n+1}}\equiv 1[p]$
	\end{tabular}$$
	\end{proof}

\section{The main result}

\noindent Here we enunciate and prove our main result
\begin{theorem}\label{main_res}
	Let $n$ be an integer. Then the $n^{th}$  Fermat
	number $F_n$ is prime if and only if there exists a  term $A_q; (1 < q < 2^n)$ of the sequence
	$(A_n)_{n\ge 1} $ (as defined in proposition \ref{propo1}) such that $ F_n$ divides $A_q$.
\end{theorem}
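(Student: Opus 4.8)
The plan is to convert the divisibility $F_n\mid A_q$ into a statement about the multiplicative order of $u=3+2\sqrt2$ in the finite ring $R=\Z[\sqrt2]/(F_n)$, and then to run a Lucas--Lehmer-type argument. Using Proposition \ref{propo1}, write $A_q=u^{2^{q-1}}+v^{2^{q-1}}$; multiplying by the unit $u^{2^{q-1}}$ and using $uv=1$ gives $u^{2^{q-1}}A_q=u^{2^{q}}+1$, so that
$$F_n\mid A_q\iff u^{2^{q}}\equiv-1\ [F_n].$$
Since $u$ is a unit and $-1\neq 1$ (as $F_n$ is odd), this is equivalent to saying that $u$ has multiplicative order exactly $2^{q+1}$ in $R^{\times}$; here $v$ is the conjugate and the inverse of $u$, which keeps the two conjugate components of the condition tied together. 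Throughout I would assume $n\ge 2$, so that $F_n\equiv 1\ [8]$ and $2$ is a quadratic residue (the indices $n=0,1$ are exceptional and would have to be checked by hand).

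For the direct implication, assume $F_n$ is prime. In the residue field $u$ is the square $(1+\sqrt2)^{2}$, so its order divides $(F_n-1)/2=2^{\,2^{n}-1}$ and is therefore a power of $2$, say $2^{s}$ with $s\le 2^{n}-1$. A short computation of $u^{2}$ and $u^{4}$ through the norm form shows $u^{4}\not\equiv 1\ [F_n]$ (otherwise $F_n$ would divide $1152$, impossible for $n\ge 2$), hence $s\ge 3$. Setting $q=s-1$ then yields $2\le q\le 2^{n}-2$, i.e. $1<q<2^{n}$, together with $u^{2^{q}}\equiv-1$, so $F_n\mid A_q$. This produces a witness $q$ in the required range.

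For the converse, suppose $F_n\mid A_q$ for some $q$ with $1<q<2^{n}$, and let $r$ be any prime divisor of $F_n$. Reducing the congruence keeps $u^{2^{q}}\equiv-1\ [r]$, so $u$ has order exactly $2^{q+1}$ in the unit group of $\Z[\sqrt2]/(r)$, which is either a field with $r^{2}$ elements or a product of two fields with $r$ elements each. In either case conjugation (Frobenius) sends $u$ to $v=u^{-1}$, so $u^{r\pm1}\equiv 1$ and $2^{q+1}\mid r-1$ or $2^{q+1}\mid r+1$; in particular $r\ge 2^{q+1}-1$. As this holds for every prime factor, a composite $F_n$ would satisfy $F_n\ge(2^{q+1}-1)^{2}$, which contradicts $F_n=2^{2^{n}}+1$ as soon as $q\ge 2^{n-1}$, forcing $F_n$ to be prime.

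The main obstacle is the tension visible in this last step: the size estimate only bites when the witnessing index lies in the upper part of the range, roughly $q\ge 2^{n-1}$, whereas the forward construction a priori only guarantees $q\ge 2$. The genuine content is thus to pin down the order of $u$ exactly — to show that for prime $F_n$ the element $1+\sqrt2$ is in fact a primitive root modulo $F_n$, so that $s=2^{n}-1$ and the unique witness is $q=2^{n}-2\ge 2^{n-1}$ — and, dually, to exclude a composite $F_n$ dividing some $A_q$ with small $q$. This order computation is a Fermat-number analogue of P\'epin's primitive-root criterion, and it is where the delicate arithmetic (quadratic reciprocity applied to $1+\sqrt2$, together with the known congruence $r\equiv1\ [2^{\,n+2}]$ for prime factors of $F_n$) would have to be brought in.
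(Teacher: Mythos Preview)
Your forward implication is correct and is a cleaner, more structural version of what the paper does. Where the paper first proves the auxiliary lemma $u^{p}\equiv u\ [p]$, introduces the set $H=\{k:A_{k}\equiv 2\ [p]\}$, shows $2^{n}+1\in H$, takes $m=\min H$, and then unwinds two steps of the recursion to reach $A_{m-2}\equiv 0\ [p]$, you go straight to the multiplicative order of $u$ in $(\Z/F_n\Z)^{\times}$ via the identity $u^{2^{q-1}}A_q=u^{2^{q}}+1$. The two arguments are equivalent in substance (the paper's ``minimum of $H$'' is exactly your order computation in disguise), but your formulation makes the Lucas--Lehmer mechanism transparent and gives the sharper range $2\le q\le 2^{n}-2$ directly, whereas the paper only gets $q\ge 1$ from $m\ge 3$.

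On the converse, you should know that the paper does \emph{not} prove it either: immediately before the proof the authors write that only the direct implication is established and that ``the converse is just a conjecture that we hope to explore in future works.'' So the gap you honestly flag --- that the prime-factor bound $r\ge 2^{q+1}-1$ only forces primality when the witness $q$ sits in the top half of the range, while nothing in the hypothesis prevents a small $q$ --- is not a defect relative to the paper; it is the actual open point. Your diagnosis that closing it amounts to a P\'epin-type primitive-root statement for $1+\sqrt{2}$ modulo $F_n$ is accurate, and goes beyond anything the paper offers for that direction.
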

In a contracted expression, the theorem may be written for a given $n\in \N$ as follows
$$
F_n \  \mbox{is prime}\iff \exists (q,k)\in \{1,2\dots, 2^n\}\times \N \ A_q=kF_n \iff  A_q\equiv 0[F_n] .
$$
We actually are able  to establish the direct implication which is a real important result in the primality tests for Fermat numbers. The converse is just a conjecture that we hope to explore in future works. Speaking with algorithm words, the theorem below gives a better stopping test comparing the existent automatic calculus used to examine primality of $F_n's$. Let us prove the theorem
\begin{proof}
Assume that $p = F_n$ is prime and consider $H =\{ n \in \N ; A_n \equiv  2[p]\}.$
We prove that $H$ is non empty set. Recall that $uv = 1$ and according to lemma\ref{lemma}, we have $u^p\equiv  u [p].$ This yields

	$$\begin{tabular}{ccc}
$u^{p-1}+v^{p-1}$ & $\equiv $ & $uv(u^{p-1}+v^{p-1})[p]$\\
$ $ & $\equiv $ & $vu^{p}+uv^{p}[p]$\\

$ $ & $\equiv $ & $ vu+uv[p].$\\
$ $&$\equiv$ &$2[p]$
\end{tabular}$$

On the other hand $u^{p-1}+v^{p-1}= u^{2^{2^n}}+v^{2^{2^n}}=
A_{2^n+1} $ then $2^n+1 \in  H$: It is therefore
legitimate to put: $ m = \min H$. A direct computation shows that $m \notin \{1,2\} $. So $m\ge  3$  and $A_m \equiv  2[p]$. this latter identity implies
$A_{m-1}^2-2\equiv A_m [p]$. Since $\Z/p\Z$ is a field and $A_{m-1}\ne 2[p]$  then $A_{m-1}\equiv -2[p]$ which is equivalent to the fact that $A_{m-2}\equiv 0[p]$. This shows the direct implication claimed by the theorem \ref{main_res} above.
\end{proof} 

\section{A slight improvement}
	
	The following result shows that $A_n$ is framed with two consecutive Fermat's number. This fact will slightly improve the algorithm in the sense that the index $q$ which satisfies $A_q\equiv 0[F_n]$, if it exists, necessarily must satisfies: $n\le q <2^n.$ Let us make precise our purposes
	\begin{proposition}
		For all $n\in \N$:       $F_n<A_n<F_{n+1}.$
	\end{proposition}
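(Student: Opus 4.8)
The plan is to prove the double inequality $F_n < A_n < F_{n+1}$ for all $n \in \N$ by exploiting the closed form from Proposition \ref{propo1}, which gives us $A_{n+1} = u^{2^n} + v^{2^n}$ where $u = 3 + 2\sqrt{2}$ and $v = 3 - 2\sqrt{2}$. Reindexing, we have $A_n = u^{2^{n-1}} + v^{2^{n-1}}$ for $n \ge 1$. The central observation is that $v = 3 - 2\sqrt{2} = (\sqrt{2}-1)^2$ is a small positive number (approximately $0.1716$), so the term $v^{2^{n-1}}$ is a tiny positive correction, and $A_n$ is essentially $u^{2^{n-1}}$. Since $F_n = 2^{2^n} + 1$ and $u = 3 + 2\sqrt{2} < 6 = 2^{2} + 2$, while $u > 4$, I expect the comparison to reduce to comparing $u^{2^{n-1}}$ with powers of $2$.

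First I would establish the lower bound $F_n < A_n$. Writing $A_n = u^{2^{n-1}} + v^{2^{n-1}} > u^{2^{n-1}}$ since $v > 0$, it suffices to show $u^{2^{n-1}} > 2^{2^n} + 1 = F_n$. Taking logarithms, this amounts to $2^{n-1}\ln u > 2^n \ln 2$ up to the additive constant, i.e. $\ln u > 2\ln 2 = \ln 4$, which holds because $u = 3 + 2\sqrt{2} > 4$. More carefully, $u^2 = 17 + 12\sqrt{2} > 16$, so $u^{2^{n-1}} = (u^2)^{2^{n-2}} > 16^{2^{n-2}} = 2^{2^n}$ for $n \ge 2$, and I would handle the small cases $n = 0, 1$ directly by computation ($A_1 = 6 > 5 = F_1$, and $A_0$ or the $n=0$ case treated separately since the sequence starts at index $1$).

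Next I would establish the upper bound $A_n < F_{n+1} = 2^{2^{n+1}} + 1$. Since $v^{2^{n-1}} < 1$, we have $A_n < u^{2^{n-1}} + 1$, so it suffices to show $u^{2^{n-1}} < 2^{2^{n+1}}$, equivalently $u < 2^{2^{n+1}/2^{n-1}} = 2^4 = 16$ after taking the $2^{n-1}$-th root. Since $u = 3 + 2\sqrt{2} < 6 < 16$, this is comfortable; concretely $u^{2^{n-1}} < 6^{2^{n-1}} < 16^{2^{n-1}} = 2^{2^{n+1}}$. I would again verify the smallest cases by hand to cover the indexing at $n = 0, 1$.

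The main obstacle I anticipate is not the analytic estimates, which are generous, but rather the bookkeeping around the index shift between the proposition's formula (which relates $A_{n+1}$ to exponent $2^n$) and the statement's indexing, together with deciding how the $n = 0$ case should be interpreted given that the sequence $(A_n)$ is only defined for $n \ge 1$. A clean way to avoid the delicate logarithmic borderline is to square $u$ first and compare $(u^2)^{2^{n-2}}$ against $16^{2^{n-2}}$ as above, since $u^2 = 17 + 12\sqrt{2}$ lies strictly between $16 = 2^4$ and $256 = 2^8$, giving both bounds simultaneously from a single clean inequality $16 < u^2 < 256$.
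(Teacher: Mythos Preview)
Your proposal is correct but follows a genuinely different route from the paper. The paper argues by direct induction on the recurrences, exploiting the identity $F_{n+1}=(F_n-1)^2+1$ alongside $A_{n+1}=A_n^2-2$: from $F_n<A_n$ one gets $(F_n-1)^2+1<(A_n-1)^2+1<A_n^2-2$, i.e.\ $F_{n+1}<A_{n+1}$; and from $A_n\le F_{n+1}-1$ one gets $A_{n+1}=A_n^2-2\le (F_{n+1}-1)^2-2<(F_{n+1}-1)^2+1=F_{n+2}$. No closed form, no irrational numbers, no numerical estimates are needed---the two recursions simply interlock. Your approach via the closed form $A_n=u^{2^{n-1}}+v^{2^{n-1}}$ and the single bracket $16<u^2<256$ is equally valid and arguably more explanatory (it isolates the reason as $4<u<16$), at the cost of carrying the irrational $u$ and the $v$-correction term throughout.

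One small point to tighten: from $u^2>16$ you only obtain $u^{2^{n-1}}>2^{2^n}$, not $>2^{2^n}+1$, so the lower bound as written falls one short of $F_n$. This is harmless: since $A_n$ is an even integer and $F_n=2^{2^n}+1$ is odd, $A_n>2^{2^n}$ forces $A_n\ge 2^{2^n}+2>F_n$. Alternatively, use the stronger $u^2>32$ to get ample room. Your remark about the $n=0$ indexing is also apt; the paper's own proof begins at $n=1$.
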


\begin{proof}
	
	We recall that: $F_1=5$, $A_1=6$ and $F_2=17$. So the result is obvious for the initialization.\\
	Assume that for $n\ge 1$, we have: $F_n<A_n$ and $A_n<F_{n+1}$.\\
	So $(F_n-1)^2+1<A_n^2-2A_n+2<A_n^2-2$ which is equivalent to $F_{n+1}<A_{n+1}$.\\
	following the same way one establishes $A_{n+1}<F_{n+2}$ since 
		$$\begin{tabular}{ccc}
	$A_n<F_{n+1}$ & $\implies $ & $A_n\le F_{n+1}-1$\\
	$ $ & $\implies $ & $A_n^2\le (F_{n+1}-1)^2$\\
	
	$ $ & $\implies $ & $A_n^2-2\le (F_{n+1}-1)^2-2$\\
	$ $&$\implies$ &$A_n^2-2\le (F_{n+1}-1)^2+1=F_{n+2}.$
\end{tabular}$$
	
This yields to the overlap of sequences $(A_n)_n$ and $(F_n)_n$ as follows
$$
\forall n\in \N: F_n<A_n<F_{n+1}<A_{n+1}<F_{n+2}......
$$	
\end{proof}
On the other hand, from $A_q\equiv 0[F_n]$ we derive $A_q>F_n$. 
Since $n$ is the smallest integer satisfying $A_n\ge F_n$, we have the proved the following interest technical lemma which is a slight amelioration of our main result

\begin{lemma}
	If $F_n$ is prime, then there will  exist $q\in \N$ such that
	$$
	A_q\equiv 0[F_n] \quad \mbox{and necessarily}\quad n\le q<2^n.
	$$
\end{lemma}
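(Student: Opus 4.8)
The plan is to combine the main theorem with the framing proposition that was just established. The main theorem guarantees that when $F_n$ is prime, some index $q$ with $1\le q\le 2^n$ satisfies $A_q\equiv 0[F_n]$; what remains is to sharpen the lower bound from $q\ge 1$ to $q\ge n$ and to rule out the endpoint $q=2^n$. Both refinements should follow purely from size considerations, using $F_n<A_n<F_{n+1}$ together with the strict monotonicity of $(A_n)_n$ and $(F_n)_n$.

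First I would record the key numerical consequence of $A_q\equiv 0[F_n]$: since $A_q$ is a positive multiple of $F_n$ and $A_q\ne 0$, we must have $A_q\ge F_n$, hence in particular $A_q>F_n-1$, so genuinely $A_q\geq F_n$. Next I would invoke the proposition to see that the sequences interlace,
$$
F_1<A_1<F_2<A_2<F_3<\cdots<F_n<A_n<F_{n+1}<\cdots,
$$
so that for every index $j<n$ we have $A_j<F_{j+1}\le F_n$, the last inequality because $(F_k)_k$ is strictly increasing and $j+1\le n$. Consequently $A_j<F_n$ for all $j<n$, which forces $A_j\not\equiv 0[F_n]$ for those small indices; therefore the index $q$ produced by the theorem must satisfy $q\ge n$. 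This is the substance of the lower bound, and it is entirely a monotonicity argument.

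For the upper bound I would argue that $q=2^n$ cannot occur. The theorem already restricts $q$ to the range $1\le q\le 2^n$, and the proof of the theorem exhibits the vanishing at index $m-2$, where $m=\min H$ and $m\le 2^n+1$, so in fact the vanishing index is at most $2^n-1$; thus $q<2^n$ strictly. I would state this explicitly and combine it with the lower bound to conclude $n\le q<2^n$, which is exactly the asserted conclusion.

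I do not expect a serious obstacle here, since every step is either a direct citation of the preceding proposition or an elementary inequality. The one point requiring slight care is the strictness at the upper endpoint: one must make sure the index furnished is genuinely less than $2^n$ rather than equal to it, which is why I would trace it back to the $m-2$ appearing in the theorem's proof rather than merely to the crude bound $q\le 2^n$. Aside from that bookkeeping, the lemma is an immediate corollary of the framing proposition and the main theorem.
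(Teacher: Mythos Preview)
Your proposal is correct and follows essentially the same route as the paper: derive $A_q\ge F_n$ from $A_q\equiv 0[F_n]$, then use the framing proposition to conclude that $n$ is the least index with $A_n\ge F_n$, forcing $q\ge n$; the strict upper bound $q<2^n$ is inherited from the main theorem via the index $m-2$. Your write-up is in fact more careful than the paper's, which compresses the argument into a single sentence preceding the lemma and leaves the strictness at $q=2^n$ implicit in the theorem statement.
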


\section{Improvement prospects}
	
	We may enumerate some new prospects on this historical topic:
	\begin{enumerate}
		\item If there are infinite prime number $F_n$, then strict monotonicity of $(A_n)_n$ will give an alternative proof of the set of all prime numbers.
		\item After implementation and automatic calculus, there will be possibility to mobilize probabilistic methods of primality testing.
		\item It seems, according to \emph{instinctive} calculus, that all numbers $(F_n)_n$ are all decomposable. It will be interesting to examine number of factors in each $F_n$ decomposition. An immediate application, when exactly two prime numbers  appears in the canonical $F_n$'s factorization, consists in RSA encryption.
		\item Let $(S_n)_n$ the sequence defined as: $A_n=2S_n$. Then it is possible to prove that every $S_n$ is either prime either $(S_k)_{k\ne n}$. Some fruitful applications are also immediate for this simple and elementary result.
		  
	\end{enumerate}

\section*{Acknowledgment:}
\noindent The authors thank faithfully  Professor B. Guelzim for his prior expression of availability to help us for technical computation, and thank also L. Karbil for implementing the algorithm.

\end{document}